\newcommand{\tr}{\top}
\newcommand{\Nset}{\mathbb{N}}
\newcommand{\Rset}{\mathbb{R}}
\newcommand{\Uset}{\mathbb{U}}
\newcommand{\Yset}{\mathbb{Y}}
\newcommand{\cN}{\mathcal{N}}
\newcommand{\cK}{\mathcal{K}}
\newcommand{\tini}{\text{ini}}
\newcommand{\bu}{\mathbf{u}}
\newcommand{\br}{\mathbf{r}}
\newcommand{\bx}{\mathbf{x}}
\newcommand{\by}{\mathbf{y}}
\newcommand{\bg}{\mathbf{g}}
\newcommand{\bY}{\mathbf{Y}}
\newcommand{\bU}{\mathbf{U}}
\newcommand{\spc}{\text{spc}}
\newcommand{\col}{\operatorname{col}}%
\newcommand{\diag}{\operatorname{diag}}
\newtheorem{theorem}{Theorem}[section]}
\newtheorem{lemma}[theorem]{Lemma}}
\newtheorem{definition}[theorem]{Definition}}
\newtheorem{remark}[theorem]{Remark}}
\newtheorem{assumption}[theorem]{Assumption}}
\title{\LARGE \bf
	Generalized Data--driven Predictive Control
}
\author{M. Lazar and P. C. N. Verheijen
	\thanks{The authors are with the Department of Electrical Engineering, Eindhoven University of Technology, The Netherlands, E-mails:
		{\tt\small m.lazar@tue.nl, p.c.n.verheijen@tue.nl}.}%
}
\begin{document}
	
\maketitle
\thispagestyle{empty}
\pagestyle{empty}

\begin{abstract}Data--driven predictive control (DPC) is becoming an attractive alternative to model predictive control as it requires less system knowledge for implementation and reliable data is increasingly available in smart engineering systems. Two main approaches exist within DPC, which mostly differ in the construction of the predictor: estimated prediction matrices (unbiased for large data) or Hankel data matrices as predictor (allows for optimizing the bias/variance trade--off). In this paper we develop a novel, generalized DPC (GDPC) algorithm that constructs the predicted input sequence as the sum of a known input sequence and an optimized input sequence. The predicted output corresponding to the known input sequence is computed using an unbiased, least squares predictor, while the optimized predicted output is computed using a Hankel matrix based predictor. By combining these two types of predictors, GDPC can achieve high performance for noisy data even when using a small Hankel matrix, which is computationally more efficient. Simulation results for a benchmark example from the literature show that GDPC with a minimal size Hankel matrix can match the performance of data--enabled predictive control with a larger Hankel matrix in the presence of noisy data.
\end{abstract}

\section{Introduction}
\label{sec1}
Reliable data is becoming increasingly available in modern, smart engineering systems, including mechatronics, robotics, power electronics, automotive systems, and smart infrastructures, see, e.g., \cite{Hou_2013, Lagarrigue_2017} and the references therein. For these application domains, model predictive control (MPC) \cite{Maciejowski_2002, Camacho_2007} has become the preferred advanced control method for several reasons, including constraints handling, anticipating control actions, and optimal performance. Since obtaining and maintaining accurate models requires effort and reliable data becomes readily available in engineering systems, it is of interest to develop data--driven predictive control (DPC) algorithms that can be implemented in practice.  An indirect data--driven approach to predictive control design was already developed in \cite{FavoreelSPC1999} more than 20 years ago, i.e.,  subspace predictive control (SPC). The SPC approach skips the identification of the prediction model and identifies the complete prediction matrices from input--output data using least squares. This provides an unbiased predictor for sufficiently large data.

More recently, a direct data--driven approach to predictive control design was developed in \cite{CoulsonDeePC2019} based on Willems' fundamental lemma \cite{Willems_2005}, i.e., data--enabled predictive control (DeePC). The idea to use (reduced order) Hankel matrices as predictors has been put forward earlier in \cite{Yang_DeePC_2015}, but the first well--posed constrained data--enabled predictive control algorithm was formulated in \cite{CoulsonDeePC2019}, to the best of the authors' knowledge. The DeePC approach skips the identification of prediction models or matrices all together and utilizes Hankel matrices built from input--output data to parameterize predicted future inputs and outputs. In the deterministic, noise free case, equivalence of MPC and DeePC was established in \cite{CoulsonDeePC2019, Berberich_2020}, while equivalence of SPC and DeePC was shown in \cite{Fiedler_2021}. Stability guarantees for DeePC were first obtained in \cite{Berberich_2020} by means of terminal equality constraints and input--output--to--state stability Lyapunov functions. Alternatively, stability guarantees for DeePC were provided in \cite{LazarNMPC2021} using terminal inequality constraints and dissipation inequalities involving storage and supply functions.  An important contribution to DeePC is the consistent regularization cost introduced in \cite{Dorfler_PI}, which enables reliable performance in the presence of noisy data. Indeed, since the DeePC algorithm jointly solves estimation and controller synthesis problems, the regularization derived in \cite{Dorfler_PI} allows one to optimize the bias/variance trade--off if data is corrupted by noise. A systematic method for tuning the regularization cost weighting parameter was recently presented in \cite{LazarCDC_2022}.

Computationally, SPC has the same number of optimization variables as MPC, which is equal to the number of control inputs times the prediction horizon. In DeePC, the number of optimization variables is driven by the data length, which is in general much larger than the prediction horizon. Especially in the case of noisy data, a large data size is required to attain reliable predictions, see, e.g., \cite{Berberich_2020, Dorfler_PI, LazarCDC_2022}. As this hampers real--time implementation, it is of interest to improve computational efficiency of DeePC. In \cite{Breschi_2023_Auto}, a computationally efficient formulation of DeePC was provided via LQ factorization of the Hankel data matrix, which yields the same online computational complexity as SPC/MPC. In this approach, DeePC yields an unbiased predictor, similar to SPC. In \cite{Kaixiang_2022}, a singular value decomposition is performed on the original Hankel data matrix and a DeePC algorithm is designed based on the resulting reduced Hankel matrix. Therein, it was shown that this approach can significantly reduce the computational complexity of DeePC, while improving the accuracy of predictions for noisy data. In \cite{Baros_2022}, an efficient numerical method that exploits the structure of Hankel matrices was developed for solving quadratic programs (QPs) specific to DeePC. Regarding real--life applications of DeePC, the minimal data size required for persistence of excitation is typically used, see, e.g., \cite{Elokda_2021, Berberich_2021_exp}, or an unconstrained solution of DeePC is used instead of solving a QP, see, e.g., \cite{Carlet_2022}. These approaches however limit the achievable performance in the presence of noisy data and hard constraints, respectively.

In this paper we develop a novel, generalized DPC (GDPC) algorithm that constructs the predicted input sequence as the sum of a known input sequence and an optimized input sequence. The predicted output corresponding to the known input sequence is computed using an unbiased, least squares predictor based on a large data set. The optimized predicted output is computed using a Hankel matrix based on a smaller (possible different) data set. Based on the extension of Willems' fundamental lemma to multiple data sets \cite{vanWaarde_2020}, the sum of the two trajectories spanned by two (possibly different) data sets will remain a valid system trajectory, as long as the combined data matrices are collectively persistently exciting and the system is linear. By combining these two types of predictors, GDPC can achieve high performance in the presence of noisy data even when using Hankel matrices of smaller size, which is computationally efficient. The performance and computational complexity of GDPC with a minimal (according to DeePC design criteria) size Hankel matrix is evaluated for a benchmark example from the MPC literature and compared to DeePC with a Hankel matrix of varying size. 

The remainder of this paper is structured as follows. The necessary notation and the DeePC approach to data--driven predictive control are introduced in Section~\ref{sec2}. The GDPC algorithm is presented in Section~\ref{sec3}, along with design guidelines, stability analysis and other relevant remarks. Simulation results and a comparison with DeePC are provided in Section~\ref{sec4} for a benchmark example from the literature. Conclusions are summarized in Section~\ref{sec5}.

\section{Preliminaries}
\label{sec2}
Consider a discrete--time linear dynamical system subject to zero--mean Gaussian noise $w(k)\sim \cN(0,\sigma_w^2 I)$:
\begin{equation}
\label{eq:2.1}
\begin{split}
x(k+1)&=Ax(k)+Bu(k), \quad k\in\Nset,\\
y(k)&=C x(k)+w(k),\end{split}
\end{equation}
where $x\in\Rset^n$ is the state, $u\in\Rset^{n_u}$ is the control input, $y\in\Rset^{n_y}$ is the measured output and $(A,B,C)$ are real matrices of suitable dimensions. We assume that $(A,B)$ is controllable and $(A,C)$ is observable. By applying a persistently exciting input sequence $\{u(k)\}_{k\in\Nset_{[0,T]}}$ of length $T$ to system \eqref{eq:2.1} we obtain a corresponding output sequence $\{y(k)\}_{k\in\Nset_{[0,T]}}$. 

If one considers an input--output model corresponding to \eqref{eq:2.1}, it is necessary to introduce the parameter $T_\tini$ that limits the window of past input--output data necessary to compute the current output, i.e.,
\begin{equation}
	\label{eq:2:arx}
	y(k)=\sum_{i=1}^{T_\tini} a_iy(k-i)+\sum_{i=1}^{T_\tini }b_iu(k-i),
	\end{equation}
for some real--valued coefficients. For simplicity of exposition we assume the same $T_\tini$ for inputs and outputs.

Next, we introduce some instrumental notation. For any finite number $q\in\Nset_{\geq 1}$ of vectors $\{\xi_1,\ldots,\xi_q\}\in\Rset^{n_1}\times\ldots\times\Rset^{n_q}$ we will make use of the operator $\col(\xi_1,\ldots,\xi_q):=[\xi_1^\tr,\ldots,\xi_q^\tr]^\tr$. For any $k\geq 0$ (starting time instant in the data vector) and  $j\geq 1$ (length of the data vector), define
\begin{align*}
	\bar\bu(k,j) &:= \col(u(k),\ldots, u(k+j-1)),\\
    \bar\by(k,j)&: = \col(y(k), \ldots, y(k+j-1)).
\end{align*}
Let $N\geq T_\tini$ denote the prediction horizon. Then we can define the Hankel data matrices:
\begin{equation}\label{eq:hankel_data}
	\begin{aligned}
		\bU_p &:= \begin{bmatrix}\bar\bu(0,T_\tini) & \ldots & \bar\bu(T-1, T_\tini) \end{bmatrix}, \\
		\bY_p &:= \begin{bmatrix}\bar\by(1, T_\tini) & \ldots & \bar\by(T, T_\tini) \end{bmatrix},\\
		\bU_f &:= \begin{bmatrix}\bar\bu(T_\tini, N) & \ldots & \bar\bu(T_\tini+T-1, N) \end{bmatrix},\\
		\bY_f &:= \begin{bmatrix}\bar\by(T_\tini+1, N) & \ldots & \bar\by(T_\tini+T, N) \end{bmatrix}. 
	\end{aligned}
\end{equation}

According to the DeePC design \cite{CoulsonDeePC2019}, one must choose $T_\tini\geq n$ and $T\geq (n_u+1)(T_\tini+N+n)-1$, which implicitly requires an assumption on the system order (number of states). Given the measured output $y(k)$ at time $k\in\Nset$ and $T_\tini\in\Nset_{\geq 1}$ we define the sequences of known input--output data at time $k\geq T_\tini$, which are trajectories of system \eqref{eq:2.1}:  
\[\begin{split}
	\bu_\tini(k)&:=\col(u(k-T_\tini),\ldots,u(k-1)),\\
	\by_\tini(k)&:=\col(y(k-T_\tini+1),\ldots,y(k)).
\end{split}\]
Next, we define the sequences of predicted inputs and outputs at time $k\geq T_\tini$, which should also be trajectories of system \eqref{eq:2.1}:
\[\begin{split}
	\bu(k)&:=\col(u(0|k),\ldots,u(N-1|k)),\\
	\by(k)&:=\col(y(1|k),\ldots,y(N|k)). 
\end{split}\]

For a positive definite matrix $L$ let $L^\frac{1}{2}$ denote its Cholesky factorization. At time $k\geq T_\tini$, given $\bu_\tini(k), \by_\tini(k)$, the regularized DeePC algorithm \cite{Dorfler_PI} computes a sequence of predicted inputs and outputs as follows:
\begin{subequations}
	\label{eq:2.DeePC}
	\begin{align}
		\min_{\bg(k),\bu(k),\by(k),\sigma(k)}  l_N(y(N|k))&+\sum_{i=0}^{N-1}l(y(i|k),u(i|k))\nonumber\\&+\lambda_g l_g(\bg(k))+\lambda_\sigma l_\sigma(\sigma(k))\label{eq:2.DeePCa}\\
		&\text{subject to constraints:}\nonumber\\
		\begin{bmatrix}\bU_p \\\bY_p \\ \bU_f \\ \bY_f\end{bmatrix} \bg(k) &= \begin{bmatrix}\bu_\tini(k) \\ \by_\tini(k)+\sigma(k)\\ \bu(k)\\\by(k)\end{bmatrix},\label{eq:2.DeePCb}\\
		(\by(k),\bu(k))&\in\Yset^N\times\Uset^N.\label{eq:3.DeePCc}
	\end{align}
\end{subequations}
Above 
\begin{equation}
\label{eq:2:cost}
l(y,u):=\|Q^{\frac{1}{2}}(y-r_y)\|_2^2+\|R^{\frac{1}{2}}(u-r_u)\|_2^2,\quad l_\sigma(\sigma):=\|\sigma\|_2^2
\end{equation}
for some positive definite $Q,R$ matrices. The terminal cost is typically chosen larger than the output stage cost, to enforce convergence to the reference; a common choice is a scaled version of the output stage cost, i.e., $l_N(y):=\alpha l (y,0)$, $\alpha\geq 1 $. The references $r_y\in\Rset^{n_y}$ and $r_u\in\Rset^{n_u}$ can be constant or time--varying. We assume that the sets $\Yset$ and $\Uset$ contain $r_y$ and $r_u$ in their interior, respectively. The cost
\begin{equation}
\label{eq:2.lg}
l_g(\bg):=\|(I-\Pi)\bg\|_2^2,\quad \Pi:=\begin{bmatrix}\bU_p \\\bY_p \\ \bU_f\end{bmatrix}^\dagger \begin{bmatrix}\bU_p \\\bY_p \\ \bU_f\end{bmatrix}, 
\end{equation}
is a regularization cost proposed in \cite{Dorfler_PI}, where $[\cdot]^\dagger$ denotes a generalized pseudo--inverse. Notice that using such a regularization cost requires $T\geq T_\tini(n_u+n_y)+Nn_u$ in order to ensure that the matrix $I-\Pi$ has a sufficiently large null--space. If a shorter data length $T$ is desired, alternatively, the regularization cost $l_g(\bg):=\|\bg\|_2^2$ can be used. However, this regularization is not consistent, as shown in \cite{Dorfler_PI}. 

In the deterministic, noise--free case, the DeePC algorithm \cite{CoulsonDeePC2019} does not require the costs $l_g, l_\sigma$ and the variables $\sigma$. We observe that the computational complexity of DeePC is dominated by the vector of variables $\bg\in\Rset^T$, with $T\geq (n_u+1)(T_\tini+N+n)-1$. Hence, ideally one would prefer to work with the minimal value of data length $T$, but in the presence of noise, typically, a rather large data length $T$ is required for accurate predictions \cite{Berberich_2020, Dorfler_PI, LazarCDC_2022}.

\section{Generalized data--driven predictive control}
\label{sec3}
In this section we develop a novel, generalized DPC algorithm by constructing the predicted input sequence $\bu(k)$ as the sum of two input sequences, i.e., 
\begin{equation}
\bu(k):=\bar\bu(k)+\bu_g(k),\quad k\in\Nset,
\end{equation}
where $\bar\bu$ is a known, base line input sequence typically chosen as the shifted, optimal input sequence from the previous time, i.e.,
\begin{equation}
\label{eq:3:ufree}
\bar\bu(k):=\{u^\ast(1|k-1),\ldots,u^\ast(N-1|k-1),\bar u(N-1|k)\},
\end{equation}
where common choices for the last element $\bar u(N-1|k)$ are $r_u$ or $u^\ast(N-1|k-1)$. At time $k=T_\tini$, when enough input--output data is available to run the GDPC algorithm, $\bar\bu(k)$ is initialized using a zero input sequence (or an educated guess). The sequence of inputs $\bu_g$ can be freely optimized online by solving a QP, as explained next. 

Using a single persistently exciting input sequence split into two parts, or two different persistently exciting input sequences, we can define two Hankel data matrices as in \eqref{eq:hankel_data}, i.e.,
\begin{equation}
\label{eq:3:Hankel}
\begin{split}
\bar H&:=\begin{bmatrix}\bar\bU_{p} \\\bar\bY_{p} \\ \bar\bU_{f} \\ \bar\bY_{f}\end{bmatrix}\in\Rset^{(n_u+n_y)(T_\tini+N)\times T}, \\ H&:=\begin{bmatrix}\bU_p \\\bY_p \\ \bU_f \\ \bY_f\end{bmatrix}\in\Rset^{(n_u+n_y)(T_\tini+N)\times T_g},
\end{split}
\end{equation}
where the length $T$ of the first part/sequence can be taken as large as desired, an the choice of the length $T_g$ of the second part/sequence is flexible. I.e., $T_g$ should be small enough to meet computational requirements, but it should provide enough degrees of freedom to optimize the bias/variance trade off. Offline, compute the matrix
\begin{equation}
\Theta:=\bar\bY_f\begin{bmatrix}\bar\bU_{p} \\\bar\bY_{p} \\ \bar\bU_{f} \end{bmatrix}^\dagger\in\Rset^{(n_yN)\times(T_\tini(n_u+n_y)+n_uN)}.
\end{equation}
Online, at time $k\geq T_\tini$, given $\bu_\tini(k)$, $\by_\tini(k)$ and $\bar\bu(k)$, compute
\begin{equation}
\label{eq:3.gbase}
   \bar\by(k)=\Theta \begin{bmatrix} \bu_\tini(k)\\\by_\tini(k)\\ \bar\bu(k)\end{bmatrix}
\end{equation}
and solve the \emph{GDPC optimization problem}:
\begin{subequations}
	\label{eq:3.GDPC}
	\begin{align}
		\min_{\bg(k),\bu(k),\by(k),\sigma(k)}  l_N(y(N|k))&+\sum_{i=0}^{N-1}l(y(i|k),u(i|k))\nonumber\\&+\lambda_g l_g(\bg(k))+\lambda_\sigma l_\sigma(\sigma(k))\label{eq:3.GDPCa}\\
		\text{subject to constraints:}&\nonumber\\
		\begin{bmatrix}\bU_p \\\bY_p \\ \bU_f \\ \bY_f\end{bmatrix} \bg(k) &= \begin{bmatrix}0 \\ \sigma(k)\\ \bu(k)-\bar\bu(k)\\\by(k)-\bar\by(k)\end{bmatrix},\label{eq:3.GDPCb}\\
		(\by(k),\bu(k))&\in\Yset^N\times\Uset^N.\label{eq:3.GDPCc}
	\end{align}
\end{subequations}
Above, the cost functions $l_N(y)$, $l(y,u)$, $l_g(g)$ and $l_\sigma(\sigma)$ are defined in the same way as in \eqref{eq:2:cost} for some $Q,R\succ 0$. 

Since the size of the matrix $\Theta$ does not depend on the data length $T$, i.e., the number of columns of the Hankel matrix $\bar H$, the computation as in \eqref{eq:3.gbase} of the predicted output corresponding to $\bar\bu(k)$ is efficient even for a large $T$. Thus, GDPC benefits from an unbiased base line output prediction, which allows choosing $T_g$, i.e., the number of columns of the Hankel matrix $H$, much smaller than $T$. In turn, this reduces the online computational complexity of GDPC, without sacrificing performance in the presence of noisy data. It can be argued that the selection of $T_g$ provides a trade--off between computational complexity and available degrees of freedom to optimize the bias/variance trade off. 
\begin{remark}[\emph{Offset--free GDPC design}] In practice it is of interest to achieve offset--free tracking. Following the offset--free design for SPC developed in \cite{Verheijen_2021}, which was further applied to DeePC in \cite{LazarCDC_2022}, it is possible to design an offset--free GDPC algorithm by defining an incremental input sequence
\[
\Delta\bu(k):=\Delta\bar\bu(k)+\Delta \bu_g(k),\quad k\in\Nset,
\]
where $\Delta\bar\bu$ is chosen as the shifted optimal input sequence from the previous time, i.e.,
\begin{equation}
\label{eq:3:dufree}
\begin{split}
&\Delta \bar\bu(k):=\\&\{\Delta u^\ast(1|k-1),\ldots,\Delta u^\ast(N-1|k-1), \Delta \bar u(N-1|k)\}.
\end{split}
\end{equation}
The input data blocks in the Hankel matrices $\bar H$ and $H$ must be replaced with incremental input data, i.e., $\Delta \bar \bU_p$, $\Delta \bar \bU_f$ and $\Delta \bU_p$, $\Delta \bU_f$, respectively. The input applied to the system is then $u(k):=\Delta u^\ast(0|k)+u(k-1)$.
\end{remark}

In what follows we provide a formal analysis of the GDPC algorithm.
\subsection{Well-posedness and design of GDPC}
In this subsection we show that in the deterministic case GDPC predicted trajectories are trajectories of system \eqref{eq:2.1}. In this case, the GDPC optimization problem can be simplified as
\begin{subequations}
	\label{eq:3.sGDPC}
	\begin{align}
		\min_{\bg(k),\bu(k),\by(k)}  l_N(y(N|k))&+\sum_{i=0}^{N-1}l(y(i|k),u(i|k))\label{eq:3.sGDPCa}\\
		\text{subject to constraints:}&\nonumber\\
		\begin{bmatrix}\bU_p \\\bY_p \\ \bU_f \\ \bY_f\end{bmatrix} \bg(k) &= \begin{bmatrix}0 \\ 0\\ \bu(k)-\bar\bu(k)\\\by(k)-\bar\by(k)\end{bmatrix},\label{eq:3.sGDPCb}\\
		(\by(k),\bu(k))&\in\Yset^N\times\Uset^N.\label{eq:3.sGDPCc}
	\end{align}
\end{subequations}
\begin{lemma}[\emph{GDPC well--posedeness}]
\label{lem:DeeGPC}
Consider one (or two) persistently exciting input sequence(s) of sufficient length(s) and construct two Hankel matrices $\bar H$ and $H$ as in \eqref{eq:3:Hankel} with $T$ and $T_g$ columns, respectively, and such that $\bar H$ has full row rank. Consider also the corresponding output sequence(s) generated using system \eqref{eq:2.1}. For any given input sequence $\bar\bu(k)$, and initial conditions $\bu_\tini(k)$ and $\by_\tini(k)$, let $\bar\by(k)$ be defined as in \eqref{eq:3.gbase}. Then there exists a real vector $\bg(k)\in\Rset^{T_g}$ such that \eqref{eq:3.sGDPCb} holds if and only if $\bu(k)$ and $\by(k)$ are trajectories of system \eqref{eq:2.1}. 
\end{lemma}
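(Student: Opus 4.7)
The strategy is to reduce the equivalence to a combination of (i) exactness of the SPC--style predictor $\Theta$ in the noise--free case and (ii) Willems' fundamental lemma applied to the Hankel matrix $H$, glued together by linearity of system \eqref{eq:2.1}.

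First I would establish that in the deterministic case, $\bar\by(k)$ computed in \eqref{eq:3.gbase} is exactly the output of system \eqref{eq:2.1} over the horizon $\Nset_{[1,N]}$ produced by the input $\bar\bu(k)$ starting from the initial state that is consistent with $(\bu_\tini(k),\by_\tini(k))$. Since $\bar H$ has full row rank, so does $\col(\bar\bU_p,\bar\bY_p,\bar\bU_f)$, whence its pseudo--inverse acts as a right inverse and $\Theta\,\col(\bar\bU_p,\bar\bY_p,\bar\bU_f)=\bar\bY_f$ identically on the data. Because $(A,C)$ is observable and $T_\tini\geq n$, the ARX form \eqref{eq:2:arx} uniquely determines the $N$-step future output from any $T_\tini$-length past I/O together with the corresponding future input. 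Hence the linear map $\Theta$ coincides on the column range of $\col(\bar\bU_p,\bar\bY_p,\bar\bU_f)$, which by full row rank is all of $\Rset^{T_\tini(n_u+n_y)+Nn_u}$, with the true deterministic I/O response map.

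Next I would invoke Willems' fundamental lemma for the second, possibly smaller Hankel matrix $H$: by persistence of excitation of the generating input sequence, the column span of $H$ equals the set of all length--$(T_\tini+N)$ stacked trajectories of \eqref{eq:2.1}. Therefore, for any target vector, the equation $H\bg(k)=\text{target}$ is solvable if and only if the target, parsed as (past inputs, past outputs, future inputs, future outputs), is itself a trajectory of \eqref{eq:2.1}.

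It then remains to assemble the equivalence via superposition. For the ``only if'' direction, if $\bg(k)$ satisfying \eqref{eq:3.sGDPCb} exists, then $\col(0,0,\bu(k)-\bar\bu(k),\by(k)-\bar\by(k))$ is a trajectory of \eqref{eq:2.1} with zero past I/O and hence zero initial state; adding the trajectory $(\bar\bu(k),\bar\by(k))$ with initial condition $(\bu_\tini(k),\by_\tini(k))$ and using linearity yields that $(\bu(k),\by(k))$ is a trajectory of \eqref{eq:2.1} starting from those initial conditions. The ``if'' direction is the same argument run in reverse: if $(\bu(k),\by(k))$ is a valid trajectory, then linearity makes $(\bu(k)-\bar\bu(k),\by(k)-\bar\by(k))$ a zero--initial--condition trajectory, which by the previous step lies in the column span of $H$ and thus furnishes the desired $\bg(k)$.

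The main obstacle I anticipate is the first step: making the identification of $\Theta$ with the exact deterministic I/O map rigorous. The full--row--rank hypothesis on $\bar H$ and $T_\tini\geq n$ together with observability are precisely what is needed, but one must argue carefully that the pseudo--inverse produces the true future output not only on the data columns but on every consistent past I/O extension. Once this is pinned down, the remainder of the argument is a clean application of Willems' fundamental lemma combined with superposition.
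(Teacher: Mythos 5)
Your overall architecture differs from the paper's in a way that matters. The paper does not treat the two predictors separately: it defines $\bar\bg(k):=\col(\bar\bU_p,\bar\bY_p,\bar\bU_f)^\dagger\col(\bu_\tini(k),\by_\tini(k),\bar\bu(k))$, observes that stacking this with any $\bg(k)$ satisfying \eqref{eq:3.sGDPCb} yields
\[
\begin{bmatrix}\bar H & H\end{bmatrix}\begin{bmatrix}\bar\bg(k)\\ \bg(k)\end{bmatrix}=\col\bigl(\bu_\tini(k),\by_\tini(k),\bu(k),\by(k)\bigr),
\]
and then invokes the fundamental lemma \emph{once}, for the concatenated matrix $\begin{bmatrix}\bar H & H\end{bmatrix}$ --- via \cite{CoulsonDeePC2019} for a single dataset and the multi-dataset extension \cite{vanWaarde_2020} for two. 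Your first step, establishing that $\Theta$ reproduces the exact deterministic response on consistent data, is a more explicit version of what the paper compresses into the identity $\bar\bY_f\bar\bg(k)=\bar\by(k)$, and your superposition argument for the ``only if'' direction is sound: every column of $H$ is a trajectory, the trajectories of a linear system form a subspace, and $T_\tini\geq n$ together with observability lets you glue the zero-past increment onto the base trajectory.

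The genuine gap is in your ``if'' direction. You apply Willems' fundamental lemma to $H$ \emph{alone}, asserting that its column span equals the set of all length-$(T_\tini+N)$ trajectories. That requires the input generating $H$ to be persistently exciting of order $T_\tini+N+n$ by itself, i.e.\ $T_g\geq(n_u+1)(T_\tini+N+n)-1$. But the entire point of GDPC is to take $T_g$ small; the paper only assumes the two datasets are \emph{collectively} persistently exciting (the rank condition on the concatenated matrix is inherited from $\bar H$), and it is precisely the multi-dataset lemma of \cite{vanWaarde_2020} that supplies solvability in that regime. When $H$ alone is rank-deficient, there exist zero-past trajectories $\col(0,0,\bu(k)-\bar\bu(k),\by(k)-\bar\by(k))$ outside the range of $H$, and your argument cannot produce the required $\bg(k)$. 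To close the gap you must either add the (substantially stronger) standing assumption that $H$ is individually persistently exciting of sufficient order, or follow the paper and run the existence argument through the concatenated matrix $\begin{bmatrix}\bar H & H\end{bmatrix}$, so that the degrees of freedom contributed by $\bar H$ compensate for the deficiency of $H$.
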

\begin{proof}
Define $\bar\bg(k):=\begin{bmatrix}\bar\bU_{p} \\\bar\bY_{p} \\ \bar\bU_{f} \end{bmatrix}^\dagger\begin{bmatrix} \bu_\tini(k)\\\by_\tini(k)\\ \bar\bu(k)\end{bmatrix}$. Then it holds that:
\begin{align*}
\begin{bmatrix}\bar H & H\end{bmatrix}\begin{bmatrix}\bar \bg(k) \\ \bg(k)\end{bmatrix}&=\begin{bmatrix}\bar\bU_{p} \\\bar\bY_{p} \\ \bar\bU_{f} \\ \bar\bY_{f}\end{bmatrix}\bar\bg(k)+\begin{bmatrix}\bU_p \\\bY_p \\ \bU_f \\ \bY_f\end{bmatrix}\bg(k)\\&=\begin{bmatrix} \bu_\tini(k)\\\by_\tini(k)\\ \bar\bu(k)\\\bar\by(k)\end{bmatrix}+\begin{bmatrix} 0\\0\\ \bu(k)-\bar\bu(k)\\ \by(k)-\bar\by(k)\end{bmatrix}\\&=\begin{bmatrix} \bu_\tini(k)\\\by_\tini(k)\\ \bu(k)\\\by(k)\end{bmatrix}.
\end{align*}
Since the matrix $\bar H$ has full row rank, the concatenated matrix $\begin{bmatrix}\bar H & H\end{bmatrix}$ has full row rank and as such, the claim follows from \cite{CoulsonDeePC2019} if one input sequence is used and from  \cite{vanWaarde_2020} if two different input sequences are used to build the Hankel matrices.
\end{proof}

The selection of $T_g$ enables a trade off between computational complexity and available degrees of freedom to improve the output sequence generated by the known, base line input sequence. Indeed, a larger $T_g$ results in a larger null space of the data matrix $\begin{bmatrix}\bU_p\\\bY_p\end{bmatrix}$, which confines $\bg(k)$ in the deterministic case. However, high performance can be achieved in the case of noisy data even for a smaller $T_g$, because the base line predicted output is calculated using an unbiased least squares predictor, i.e., as defined in \eqref{eq:3.gbase}. 

An alternative way to define the known input sequence $\bar\bu(k)$ is to use an unconstrained SPC control law \cite{FavoreelSPC1999}. To this end, notice that the matrix $\Theta$ can be partitioned, see, e.g., \cite{Verheijen_2021}, into $\begin{bmatrix}P_1 & P_2 & \Gamma \end{bmatrix}$ such that
\[\bar\by(k)=\begin{bmatrix}P_1 & P_2\end{bmatrix}\begin{bmatrix}\bu_\tini(k)\\ \by_\tini(k)\end{bmatrix}+\Gamma \bar\bu(k).\]
Then, by defining $\Psi:=\diag\{R,\ldots,R\}$, $\Omega:=\diag\{Q,\ldots,Q,\alpha Q\}$, $G:=2\left(\Psi+\Gamma^T\Omega\Gamma\right)$ and $F:=2\Gamma^T\Omega$, we obtain:
\begin{equation}
\label{eq:3:freespc}
\begin{split}
&\bar\bu_{\spc}(k)\\&:=-G^{-1}\left(F\left(\begin{bmatrix}P_1&P_2\end{bmatrix}\begin{bmatrix}\bu_\tini(k)\\ \by_\tini(k)\end{bmatrix}-\br_y\right)-2\Psi\br_u\right),
\end{split}
\end{equation}
where $\br_y:=\col(r_y,\ldots,r_y)$ and $\br_u:=\col(r_u,\ldots,r_u)$. Since the inverse of $G$ is computed offline, computing $\bar\bu_\spc(k)$ online is numerically efficient even for a large data length $T$. In this case, since the corresponding  base line predicted output trajectory is unbiased, the simpler regularization cost $l_g(\bg):=\|\bg\|_2^2$ can be used in \eqref{eq:3.GDPCa}, without loosing consistency. When the base line input sequence is computed as in \eqref{eq:3:freespc}, the optimized input sequence $\bu_g(k)$ acts to enforce constraints, when the unconstrained SPC trajectories violates constraints, and it can also optimize the bias/variance trade off under appropriate tuning of $\lambda_g$. 

\subsection{Stability of GDPC} 
In this section we will provide sufficient conditions under which GDPC is asymptotically stabilizing. To this end define $J(\by(k),\bu(k)):=l_N(y(N|k))+\sum_{i=0}^{N-1}l(y(i|k),u(i|k))$, let $l_N(y):=l(y,0)$ and let $\by^\ast(k)$ and $\bu^\ast(k)$ denote optimal trajectories at time $k\geq T_\tini$. Given an optimal input sequence at time $k\geq T_\tini$, i.e., $\bu^\ast(k)=\bar\bu(k)+\bu^\ast_g(k)$, define a suboptimal input sequence at time $k+1$ as 
\begin{align}
\label{eq:3:shift}
\bu_s(k+1)&=\bar\bu(k+1)+\bu_g(k+1)\nonumber\\
&=\col(u^\ast(1|k),\ldots,u^\ast(N-1|k),\bar u(N|k))\\&+\col(0,\ldots,0,0),
\end{align}
and let 
\begin{align*}
\by_s(k+1)&=\bar y(k+1)\\&=\col(y^\ast(2|k),\dots,y^\ast(N|k),\bar y(N+1|k))
\end{align*}
 denote the corresponding suboptimal output trajectory. Note that the last output in the suboptimal output sequence satisfies:
\[
	\bar y(N+1|k)=\sum_{i=1}^{T_\tini} a_iy^\ast(N+1-i|k)+\sum_{i=1}^{T_\tini }b_iu^\ast(N+1-i|k).\]
\begin{definition}[\emph{Class $\cK$} functions]
A function $\varphi:\Rset_+\rightarrow\Rset_+$ \emph{belongs to class $\cK$} if it is continuous, strictly increasing and $\varphi(0)=0$. A function $\varphi:\Rset_+\rightarrow\Rset_+$ \emph{belongs to class $\cK_\infty$} if $\varphi\in\cK$ and $\lim_{s\rightarrow\infty}\varphi(s)=\infty$. $\text{id}$ denotes the identity $\cK_\infty$ function, i.e., $\text{id}(s)=s$.
\end{definition}
Next, as proposed in \cite{LazarNMPC2021}, we define a non--minimal state:
\begin{align*}
&\bx_\tini(k):=\\&\col(y(k-T_\tini),\ldots,y(k-1),u(k-T_\tini),\ldots,u(k-1)),
\end{align*} 
and the function  $W(\bx_\tini(k)):=\sum_{i=1}^{T_{\tini}} l(y(k-i),u(k-i))$. In what follows we assume that $r_y=0$ and $r_u=0$ for simplicity of exposition. However, the same proof applies for any constant references that are compatible with an admissible steady--state.
\begin{assumption}[\emph{Terminal stabilizing condition}]
\label{assum:3.1}
For any admissible initial state $\bx_\tini(k)$ there exists a function $\rho\in\cK_\infty$, with $\rho<\text{id}$, a prediction horizon $N\geq T_\tini$ and $\bar u(N|k)\in\Uset$ such that $\bar y(N+1|k)\in\Yset$ and 
\begin{equation}
\label{eq:3:terminal}
l(\bar y(N+1|k),\bar u(N|k))-(\text{id}-\rho)\circ l(y(k-T_\tini),u(k-T_\tini))\leq 0.
\end{equation}
\end{assumption}
\begin{theorem}[\emph{Stability of GDPC}] Suppose that there exist $\alpha_{1,l}, \alpha_{2,l}, \alpha_{2,J}\in\cK_\infty$ such that $\forall (y,u)\in\Yset\times\Uset$
\begin{subequations}
\label{eq:3:bounds}
\begin{align}
\alpha_{1,l}(\|\col(y,u)\|)&\leq l(y,u)\leq\alpha_{2,l}(\|\col(y,u)\|),\label{eq:3:boundsa}\\
J(\by^\ast(k),\bu^\ast(k))&\leq \alpha_{2,J}(\|\bx_\tini(k)\|),\quad\forall \bx_\tini(k)\in\cN,\label{eq:3:boundsb}
\end{align}
\end{subequations}
for some proper set $\cN$ with the origin in its interior. Furthermore, let Assumption~\ref{assum:3.1} hold and suppose that problem \eqref{eq:3.sGDPC} is feasible for all $k\geq T_\tini$. Then system \eqref{eq:2.1} in closed--loop with the GDPC algorithm that solves problem \eqref{eq:3.sGDPC} is asymptotically stable.
\end{theorem}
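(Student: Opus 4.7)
The plan is to build a Lyapunov function on the non--minimal state $\bx_\tini(k)$ by adding the optimal GDPC cost to the past--stage--cost memory $W(\bx_\tini(k))$, in the spirit of the dissipation argument of \cite{LazarNMPC2021}. I would take $V(\bx_\tini(k)):=J(\by^\ast(k),\bu^\ast(k))+W(\bx_\tini(k))$ as the candidate. A $\cK_\infty$ lower bound in $\|\bx_\tini(k)\|$ follows from the left inequality of \eqref{eq:3:boundsa} applied to each of the $T_\tini$ summands of $W$, while a matching $\cK_\infty$ upper bound on a neighbourhood of the origin is obtained by combining \eqref{eq:3:boundsb} with $T_\tini$ applications of the right inequality of \eqref{eq:3:boundsa}.

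The core descent step relies on the shifted sequence \eqref{eq:3:shift}. First I would verify GDPC--feasibility of $(\bu_s(k+1),\by_s(k+1))$ at time $k+1$: all but the last input/output entries are inherited from the optimal trajectory at $k$, while the terminal pair $(\bar u(N|k),\bar y(N+1|k))$ lies in $\Uset\times\Yset$ by Assumption~\ref{assum:3.1}; existence of a consistent $\bg(k+1)$ for \eqref{eq:3.sGDPCb} then follows from Lemma~\ref{lem:DeeGPC}, because \eqref{eq:2:arx} together with Assumption~\ref{assum:3.1} guarantees that the extended sequence is a genuine trajectory of \eqref{eq:2.1}. Exploiting $l_N(y)=l(y,0)$ and the quadratic form of $l$, the cost should telescope: the $\|Q^{\frac{1}{2}}y^\ast(N|k)\|_2^2$ contributions from $l_N(y^\ast(N|k))$ and $l(y^\ast(N|k),\bar u(N|k))$ cancel, yielding
\begin{equation*}
J(\by_s(k+1),\bu_s(k+1))-J(\by^\ast(k),\bu^\ast(k)) = l(\bar y(N+1|k),\bar u(N|k))-l(y(k),u(k)).
\end{equation*}

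Optimality of $\bu^\ast(k+1)$ combined with Assumption~\ref{assum:3.1} applied to the first term on the right, together with the identity $W(\bx_\tini(k+1))-W(\bx_\tini(k))=l(y(k),u(k))-l(y(k-T_\tini),u(k-T_\tini))$, then produces the dissipation
\begin{equation*}
V(\bx_\tini(k+1))-V(\bx_\tini(k))\leq-\rho\circ l(y(k-T_\tini),u(k-T_\tini)).
\end{equation*}
Telescoping along the closed loop gives $\sum_k\rho\circ l(y(k-T_\tini),u(k-T_\tini))<\infty$, hence $l(y(k),u(k))\to 0$, so $y(k)\to 0$ and $u(k)\to 0$; observability of $(A,C)$ (an IOSS--type argument) then transfers input/output convergence to convergence of the state of \eqref{eq:2.1}.

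The main obstacle I anticipate is that the per--step decrement of $V$ is positive definite only in the \emph{oldest} memory component $(y(k-T_\tini),u(k-T_\tini))$ of $\bx_\tini(k)$ rather than in the full non--minimal state; this is precisely why $W$ must be included in $V$, since the shift structure of $\bx_\tini(k)$ together with summability of $\rho\circ l$ propagates the decrement across every memory slot. A subtle but important sanity check is that the choice $l_N(y)=l(y,0)$ is what makes the telescoping collapse so neatly; a more general terminal cost would leave a residual terminal mismatch that would have to be absorbed by a strengthened version of Assumption~\ref{assum:3.1}.
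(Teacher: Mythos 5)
Your proposal follows essentially the same route as the paper: the same storage function $V(\bx_\tini(k))=J(\by^\ast(k),\bu^\ast(k))+W(\bx_\tini(k))$, the same shifted suboptimal candidate \eqref{eq:3:shift}, the same telescoping identity yielding $l(\bar y(N+1|k),\bar u(N|k))-l(y(0|k),u(0|k))$, and the same use of Assumption~\ref{assum:3.1} to obtain the dissipation bound $-\rho\circ l(y(k-T_\tini),u(k-T_\tini))$. The only differences are elaborations rather than a new argument --- you explicitly verify feasibility of the shifted candidate via Lemma~\ref{lem:DeeGPC} and unpack the final convergence step (summability plus an IOSS--type argument), whereas the paper delegates these to Lemma~14 and Corollary~17 of \cite{LazarNMPC2021}.
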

\begin{proof}
As done in \cite{LazarNMPC2021} for the DeePC algorithm, we consider the following storage function 
\[V(\bx_\tini(k)):=J(\bu^\ast(k),\by^\ast(k))+W(\bx_\tini(k)),\]
and we will prove that it is positive definite and it satisfies a dissipation inequality. First, from \eqref{eq:3:boundsa} and by Lemma~14 in \cite{LazarNMPC2021} we obtain that there exist $\alpha_{1,V},\alpha_{2,V}\in \cK_\infty$ such that
\[\alpha_{1,V}(\|\bx_\tini(k)\|)\leq V(\bx_\tini(k))\leq \alpha_{2,V}(\|\bx_\tini(k)\|).\]
Then, define the supply function
\begin{align*}
&s(y(k-T_\tini),u(k-T_\tini)):=\\
&l(\bar y(N+1|k),\bar u(N|k))-l(y(k-T_\tini),u(k-T_\tini)).
\end{align*}
By the principle of optimality, it holds that
\begin{align*}
V&(\bx_\tini(k+1))-V(\bx_\tini(k))=\\&J(\by^\ast(k+1),\bu^\ast(k+1))+W(\bx_\tini(k+1))
\\&-J(\by^\ast(k),\bu^\ast(k))-W(\bx_\tini(k))\\
&\leq J(\by_s(k+1),\bu_s(k+1))-J(\by^\ast(k),\bu^\ast(k))\\
& + W(\bx_\tini(k+1))-W(\bx_\tini(k))\\
&=l(\bar y(N+1|k),\bar u(N|k))-l(y(0|k),u(0|k))\\
& + l(y(0|k),u(0|k))-l(y(k-T_\tini),u(k-T_\tini))\\
&=s(y(k-T_\tini),u(k-T_\tini)).
\end{align*}
Hence, the storage function $V(\bx_\tini(k))$ satisfies a dissipation inequality along closed--loop trajectories. Since by \eqref{eq:3:terminal} the supply function satisfies
\begin{align*}
s(y(k-T_\tini),u(k-T_\tini))\leq -\rho\circ l(y(k-T_\tini),u(k-T_\tini)),
\end{align*}
the claim then follows from Corollary~17 in \cite{LazarNMPC2021}.
\end{proof}
Notice that condition \eqref{eq:3:terminal} corresponds to a particular case of condition (25) employed in Corollary~17 in \cite{LazarNMPC2021}, i.e., for  $M=1$. 
\begin{remark}[\emph{Terminal stabilizing condition}]
The terminal stabilizing condition \eqref{eq:3:terminal} can be regarded as an implicit condition, i.e., by choosing the prediction horizon $N$ sufficiently large, this condition is more likely to hold. Alternatively, it could be implemented as an explicit constraint in problem \eqref{eq:3.sGDPC} by adding one more input and output at the end of the predicted input and output sequences $\bu(k)$, $\by(k)$, respectively. This also requires including the required additional data in the corresponding $\bar H$ and $H$ Hankel matrices. This yields a convex quadratically constrained QP, which can still be solved efficiently. The stabilizing condition \eqref{eq:3:terminal} can also be used in the regularized DGPC problem \eqref{eq:3.GDPC} to enforce convergence, but in this case a soft constraint implementation is recommended to prevent infeasibility due to noisy data.
\end{remark}
It is worth to point out that the conditions invoked in \cite{Berberich_2020} imply that condition \eqref{eq:3:terminal} holds, i.e., Assumption~\ref{assum:3.1} is less conservative. Indeed, if a terminal equality constraint is imposed in problem \eqref{eq:3.sGDPC}, i.e., $y(N|k)=r_y$, then $\bar u(N|k)=r_u$ is a feasible choice at time $k+1$, which yields $\bar y(N+1|k)=r_y$ and hence, $l(\bar y(N+1|k),\bar u(N|k))=0$. Hence, \eqref{eq:3:terminal} trivially holds since the stage cost $l(y,u)$ is positive definite and $\rho<\text{id}$. Alternatively, one could employ a data--driven method to compute a suitable invariant terminal set in the space of $\bx_\tini$, as proposed in \cite{Berberich_2021}. Tractable data--driven computation of invariant sets is currently possible for ellipsoidal sets, via linear matrix inequalities, which also yields a convex quadratically constrained QP that has to be solved online.   

\section{Simulations results}
\label{sec4}
In this section we consider a benchmark MPC illustrative example from \cite[Section~6.4]{Camacho_2007} based on the flight control of the longitudinal motion of a Boeing 747. After discretization with zero--order--hold for $T_s=0.1[s]$ we obtain a discrete--time linear model as in \eqref{eq:2.1} with:
\begin{equation}
\label{ex:4:dynamics}
\begin{split}
A&=\begin{bmatrix} 0.9997  &  0.0038  & -0.0001 &  -0.0322\\
   -0.0056 &   0.9648  &  0.7446  &  0.0001\\
    0.0020 &  -0.0097  &  0.9543  & -0.0000\\
    0.0001 &  -0.0005 &   0.0978  &  1.0000\end{bmatrix},\\ 
    B&=\begin{bmatrix} 0.0010  &  0.1000\\
   -0.0615 &   0.0183\\
   -0.1133  &  0.0586\\
   -0.0057   & 0.0029\end{bmatrix},\\
   C&=\begin{bmatrix}  1.0000     &    0 &        0   &      0\\
         0  & -1.0000   &      0  &  7.7400\end{bmatrix},
\end{split}
\end{equation}
with two inputs, the throttle $u_1$ and $u_2$, the angle of the elevator, and two outputs, the longitudinal velocity and the climb rate, respectively.
The inputs and outputs are constrained as follows:
\begin{align*}
\Uset&:=\left\{u\in\Rset^2 \ : \ \begin{bmatrix}-20\\-20\end{bmatrix}\leq u \leq \begin{bmatrix}20\\20\end{bmatrix}\right\}\\ 
\Yset&:=\left\{y\in\Rset^2 \ : \ \begin{bmatrix}-25\\-15\end{bmatrix}\leq y \leq \begin{bmatrix}25\\15\end{bmatrix}\right\}.
\end{align*}
The cost function of the predictive controllers is defined as in \eqref{eq:2.DeePC} and \eqref{eq:2:cost} using $\lambda_g = 10^5$, $\lambda_\sigma = 10^7$, $N=20$, $T_{ini}=20$, $Q=10\cdot I_{n_y}$ and $R=0.01\cdot I_{n_u}$. For GDPC, if the suboptimal input sequence is computed as in \eqref{eq:3:ufree}, using shifted optimal sequence from the previous time with $\bar{u}(k+N-1) = u^\ast(k-2+N)$, the regularization cost $l_g(\mathbf{g}(k))$ is defined as in \eqref{eq:2.lg}. If the unconstrained SPC solution is used to compute the suboptimal input sequence, then the regularization cost $l_g(\mathbf{g}(k))=\lambda_g\|\bg(k)\|_2^2$ is used. For DeePC the regularization cost $l_g(\mathbf{g}(k))$ is defined as in \eqref{eq:2.lg}. In this way, all 3 compared predictive control algorithms utilize consistent output predictors.  
The real--time QP (or quadratically constrained QP) control problem is solved using Mosek \cite{mosek} on a laptop with an Intel i7-9750H CPU and 16GB of RAM. 

In what follows, the simulation results are structured into 3 subsections, focusing on nominal, noise--free data performance, noisy data performance for low and high noise variance and comparison with DeePC. The controllers will only start when $T_\tini$ samples have been collected. For the time instants up to $T_\tini$, the system is actuated by a small random input. For the sake of a sound comparison, the random input signal used up to $T_\tini$ is identical for all simulations/predictive controllers. In the data generation experiment, the input sequence is constructed as a PRBS signal between $[-3, 3]$.

\subsection{Noise--free data GDPC performance}
In this simulation we implement the GDPC algorithm with the suboptimal input sequence computed as in \eqref{eq:3:ufree}. Figure \ref{Fig:DeeGPC_noNoise} shows the outputs, inputs and optimized inputs $\bu_g$ over time.

\begin{figure}[h]
	\centering
	\includegraphics[width=1\columnwidth]{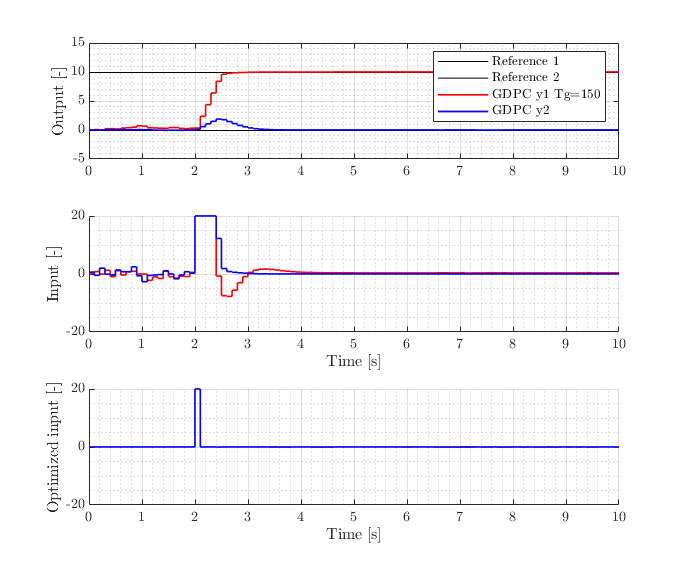}
	\caption{GDPC tracking performance: $T_g=150$, $T=1000$,  $\sigma_w^2I = 0I$.}.
	\label{Fig:DeeGPC_uf}
\end{figure}

We observe that the GDPC closed--loop trajectories converge to the reference values and that the optimized inputs are active only at the start, after which the suboptimal shifted sequence becomes optimal. The stabilizing condition \eqref{eq:3:terminal} is implicitly satisfied along trajectories; when imposed online, the GDPC problem is recursively feasible and yields the same trajectories.

\subsection{Noisy data GDPC performance}
Next we illustrate the performance of GDPC for noisy data with a low and high variance.  
\begin{figure}[h]
	\centering
	\includegraphics[width=1\columnwidth]{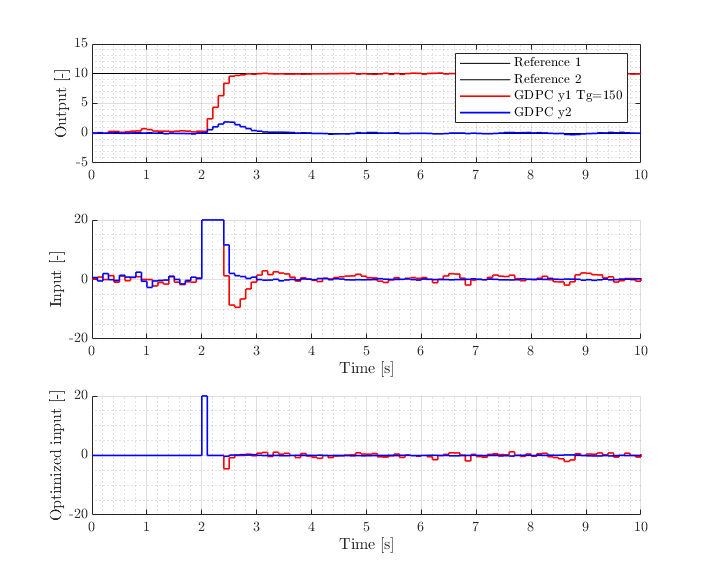}
	\caption{GDPC tracking performance: $T_g=150$, $T=1000$,  $\sigma_w^2I = 0.05I$, $\bar{u}$ as in \eqref{eq:3:ufree}.\label{Fig:DeeGPC_noNoise}}
\end{figure}

Figure~\ref{Fig:DeeGPC_uspc} shows the GDPC response using $\bar{u}(k)$ as defined in \eqref{eq:3:freespc}, i.e., using the unconstrained SPC solution. 
\begin{figure}[h]
	\centering
	\includegraphics[width=1\columnwidth]{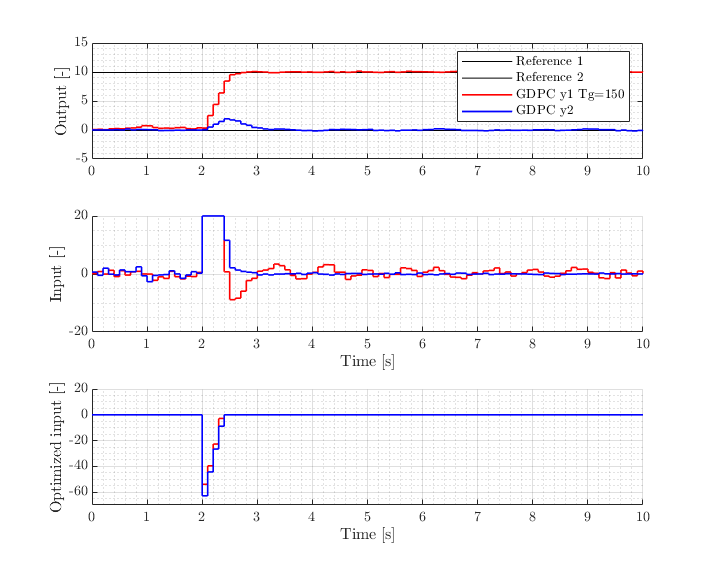}
	\caption{GDPC tracking performance: $T_g=150$, $T=1000$,  $\sigma_w^2I = 0.05I$, $\bar{u}$ as in \eqref{eq:3:freespc}. \label{Fig:DeeGPC_uspc}}
\end{figure}
Although the two different methods to calculate the base line input sequence  $\bar{u}(k)$ show little difference in the resulting total input $u(k)$, the optimized part of the input, $u_g(k)$ shows a notable difference. For the GDPC algorithm that uses the unconstrained SPC we see that the optimized input only acts to enforce constraints, while around steady state the unconstrained SPC becomes optimal.

Next, we show the performance of GDPC for high--variance noise, which also requires a suitable increase of the data size.
\begin{figure}[h]
	\centering
	\includegraphics[width=1\columnwidth]{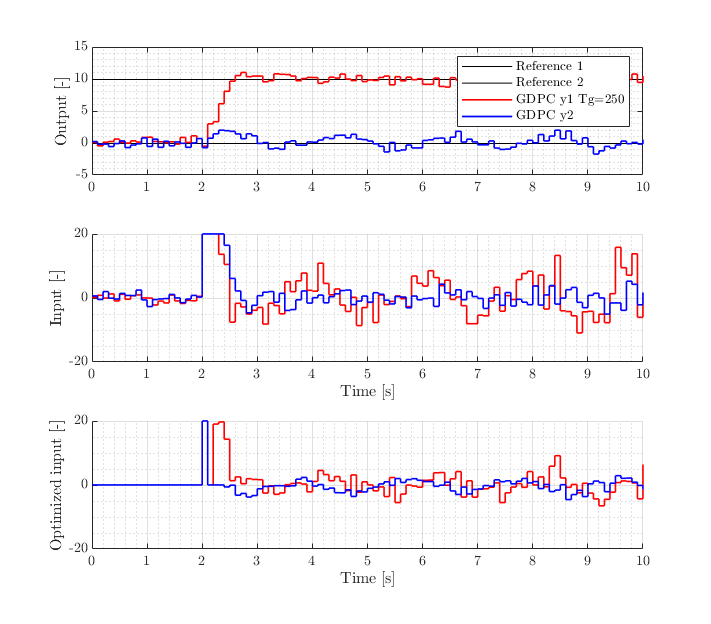}
	\caption{GDPC tracking performance: $T_g=250$, $T=5000$,  $\sigma_w^2I = 0.5I$, $\bar{u}$ as in \eqref{eq:3:ufree}. \label{Fig:DeeGPC_gh_uspc}}
\end{figure}
The GDPC simulation results with $\bar\bu(k)$ computed using the unconstrained SPC solution are shown in Figure~\ref{Fig:DeeGPC_T1000}. We see that in this case the optimized input is active also around steady state, which show that GDPC indeed optimizes the bias variance trade off with respect to the SPC solution.
\begin{figure}[h]
	\centering
	\includegraphics[width=1\columnwidth]{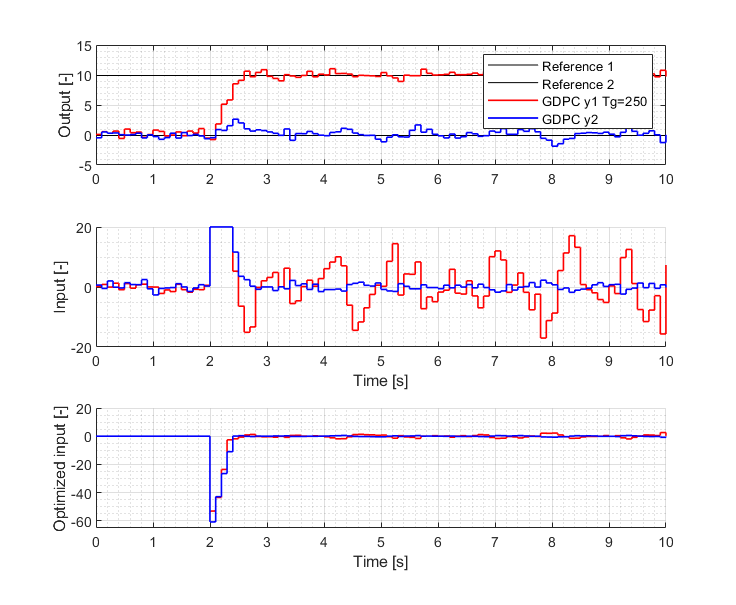}
	\caption{GDPC tracking performance: $T_g=250$, $T=5000$,  $\sigma_w^2I = 0.5I$, $\bar{u}$ as in \eqref{eq:3:freespc}.\label{Fig:DeeGPC_T1000}}
\end{figure}
In Figure \ref{Fig:DeeGPC_gh_uspc} it can be observed that DeePC requires a data sequence of length $750$ to achieve similar performance with GDPC with data lengths  $T_g=250$ (relevant for online complexity), $T=5000$. 
\begin{figure}[h]
	\centering
	\includegraphics[width=1\columnwidth]{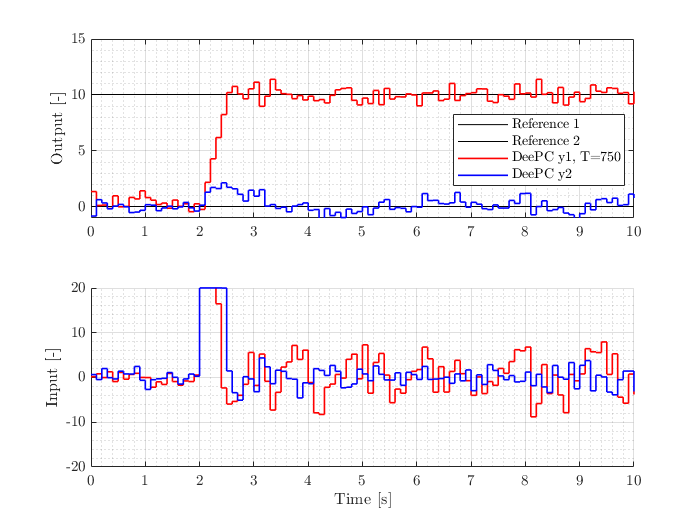}
	\caption{DeePC tracking performance: $T=750$,  $\sigma_w^2I = 0.5I$.\label{Fig:DeeGPC_gh_uspc}}
\end{figure}
The three tested predictive controllers yield  the following average computational time for the high--variance noise simulation: GDPC with $\bar u$ as in \eqref{eq:3:ufree} - 70$\text{ms}$; GDPC with $\bar u$ as in \eqref{eq:3:freespc} - 30$\text{ms}$; DeePC - 680$\text{ms}$. This shows that for high noise and large scale systems the GDPC with $\bar u$ as in \eqref{eq:3:freespc} is the most efficient alternative.

\subsection{Comparison with DeePC over multiple runs}
In this section, we compare the performance of GDPC with DeePC for different sizes of $T$ over multiple runs. The performance can be expressed as \cite{Breschi_2023_Auto}:
\begin{align}
    \mathcal{J} &= \sum_{k=T_\tini}^{t_\text{max}} \|Q^\frac{1}{2}(y(k)-r_y(k))\|_2^2 + \|R^\frac{1}{2}(u(k)-r_u(k))\|_2^2 \\
    \mathcal{J}_u &= \sum_{k=T_\tini}^{t_\text{max}} \|u(k)\|_2^2,
\end{align}
where $t_\text{max}$ is the simulation time and note that the performance scores are computed after the simulation ends, thus using simulated data (not predicted data).  Furthermore, both the data-collecting experiment and the simulation are influenced by noise with variance $\sigma_w^2I = 0.05I$. 
\begin{figure}[h]
\centering
\includegraphics[width=0.8\columnwidth]{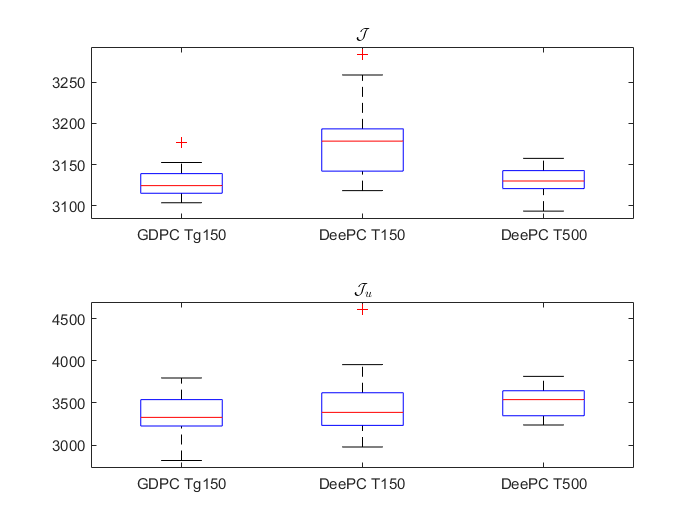}
\caption{GDPC versus DeePC performance, $\sigma_w^2I = 0.05I$.\label{Fig1}}
\end{figure}

As shown in Figure~\ref{Fig1}, we notice that the GDPC predictor with data length $T_g=150$ (Hankel matrix based predictor) and $T=1000$ (least squares based predictor) can match the performance of DeePC with data length $T=500$, while the DeePC performance with $T=150$ is lower. Also, the input cost $\mathcal{J}_u$ is lower for GDPC compared with the the same cost for DeePC with $T=500$.
\begin{table}[h!]
\label{4:table1}
\title{Table 1: Average CPU time for various predictive controllers}
\centering
 \begin{tabular}{|c | c | c | c|} 
 \hline
 & GDPC $T_g=150$ & DeePC $T=150$ & DeePC $T=500$\\
 \hline
$t_\text{cpu}$ & 33ms & 31ms & 235ms \\ 
 \hline
 \end{tabular}
\end{table}

From a computational complexity point of view, as shown in Table~1, the average CPU time of GDPC with $T_g=150$ and $T=1000$ is of the same order as the average CPU time of DeePC with $T=150$, while the average CPU time of DeePC with $T=500$ is about 8 times higher. 

The obtained results validate the fact that GDPC offers more flexibility to optimize the trade off between control performance in the presence of noisy data and online computational complexity. As such, GDPC provides engineers with a practical and robust data--driven predictive controller suitable for real--time implementation.

\section{Conclusions}
\label{sec5}
In this paper we developed a generalized data--driven predictive controller that constructs the predicted input sequence as the sum of a known, suboptimal input sequence and an optimized input sequence that is computed online. This allows us to combine two data--driven predictors: a least squares based, unbiased predictor for computing the suboptimal output trajectory and a Hankel matrix based data--enabled predictor for computing the optimized output trajectory. We have shown that this formulation results in a well--posed data--driven predictive controller with similar stabilizing properties as DeePC. Also, in simulation, we showed that the developed GDPC algorithm can match the performance of DeePC with a large data sequence, for a  smaller data sequence for the Hankel matrix based predictor, which is computationally advantageous.

\end{document}